\numberwithin{equation}{section}
 \theoremstyle{plain}
\newtheorem{theorem}{Theorem}[section]
\newtheorem{lemma}[theorem]{Lemma}
\newtheorem{corollary}[theorem]{Corollary}
\newcommand{\no}{\nonumber}
\begin{document}
\title[Andrews-Beck type congruences modulo powers of $5$]
{Andrews-Beck type congruences modulo powers of $5$}

\author{Nankun Hong}
\author{Renrong Mao}

\address{Center for Pure Mathematics, School of Mathematical Sciences, Anhui University,
	Hefei 230001, PR China}
\email{hongnk@ahu.edu.cn}

\address{Department of Mathematics, Soochow University,
SuZhou 215006, PR China}
\email{rrmao@suda.edu.cn}
\maketitle
\textbf{Abstract:}
	Let $NT(m,k,n)$ denote the total number of parts in the partitions of $n$ with rank congruent to $m$ modulo $k$.
Andrews proved Beck's conjecture on congruences for $NT(m,k,n)$ modulo $5$ and $7$. Generalizing Andrews' results, Chern obtain congruences for $NT(m,k,n)$ modulo $11$ and $13$. More recently, the second author use the theory of Hecke operators to establish congruences for such partition statistics modulo powers of primes $\ell\geq7$. In this paper, we obtain Andrews-Beck type congruences modulo powers of $5$.

\textbf{Keywords:} integer partitions; Andrews-Beck type congruences; ranks of partitions

\textbf{Mathematics Subject Classification (2010):} 11P83, 05A17

\section{Introduction}

	A partition of a positive integer $n$ is a non-increasing sequence of positive integers whose sum is $n$. Let $p(n)$ denote the number of partitions of $n$. Ramanujan \cite{ramanujan1919some} conjectured that, if $\alpha\ge 1$ and $\delta_\alpha$ is the reciprocal modulo $5^\alpha$ of 24,  then 
	\begin{align}
		p(5^\alpha +\delta_\alpha) \equiv 0\pmod {5^\alpha}.\label{pnmod5}
	\end{align}
Using the modular equation of fifth order, G. N. Watson \cite{watson1938ramanujans} proved Ramanujan's conjecture. He also established that 
	\[
		p(5^\alpha n+\delta_\alpha \pm 5^{\alpha-1}) \equiv 0 \pmod {5^\alpha}
	\]
	with odd $\alpha\ge 3$.	
In \cite{HuntHirschhorn1981asimple},  M. D. Hirschhorn and D. C. Hunt provided a more elementary proof of \eqref{pnmod5}. Ramanujan's conjecture inspired numerous studies on congruences for many other types of partition statistics.
In this paper, we obtain Andrews-Beck type congruences modulo powers of $5$.
	
		Let $NT(m,k,n)$ denote the total number of parts in the partitions of $n$ with rank congruent to $m$ modulo $k$.
		Andrews \cite{andrews} proved the following congruences (first conjectured by Beck) 
		\begin{align} \label{ntmod5}
			&NT(1, 5, 5n+i) - NT(4,5,5n+i) \nonumber\\&+ 2NT(2,5,5n+i) - 2NT(3,5,5n+i) \nonumber\\&\equiv 0 \pmod{5},
			\intertext{ for $i=1$ or $4$ and for $j=1$ or $5$,}
			&N T(1,7,7 n+i)-N T(6,7,7 n+i)\nonumber \\
			&+N T(2,7,7 n+i)-N T(5,7,7 n+i)\nonumber \\
			&-N T(3,7,7 n+i)+N T(4,7,7 n+i) \nonumber\\
			&\equiv 0 \pmod{7}.
		\end{align}
	Andrews' work motivated studies on variations of Andrews-Beck partition statistics. See for example, \cite{Chan,Chern,lin}. 
More results on Andrews-Beck type congruences can be found in \cite{chern2,kim,yao}. 
In particular, Chern \cite{chern2} obtained congruences for $NT(m,k,n)$ modulo $11$ and $13$. 
By \cite[Corollary 1.2]{mao2022total}, we have
	\begin{align}\label{gencn}
		\sum_{n=0}^\infty c(n)q^n
		& :=\sum_{n=0}^\infty \left(N T(4,5,5 n+4)-N T(1,5,5 n+4)\right)q^n=	\frac{E_5^4}{E_1},
	\end{align}
where $
E_j:=E_j(q):=\prod_{n = 1}^\infty\left(1-q^{jn}\right).$
Applying \eqref{gencn} and the theory of Hecke operators, the second author \cite{maocon} established Andrews-Beck type congruences modulo powers of primes $\ell\geq7.$ For example, one can deduce from \cite[Theorem 1.1]{maocon} that, for primes $\ell \equiv4\pmod{5}$ and for integers $N\geq0$, we have
	\begin{align}
	&NT\left(4,5,5\left(\ell^{2m+1}N+\frac{\ell^{2m}(24\ell-1)-95}{120}\right)+4\right)\no\\&-NT\left(1,5,5\left(\ell^{2m+1}N+\frac{\ell^{2m}(24\ell-1)-95}{120}\right)+4\right)\no
	\\&\equiv 0\pmod{\ell^m}. \no
\end{align}
	The first objective of this paper is to prove the following Andrews-Beck type congruences modulo powers of $5$.
	
\begin{theorem}\label{th1}
For positive integers $\alpha$, define
	\begin{align*}
		\tilde{	\delta}_\alpha:=\left\{\begin{array}{l}
			\frac{23\cdot 5^{\alpha}-19}{24},\quad\textrm{ if $\alpha$ is odd,} \\
			\\
			\frac{19\cdot(5^{\alpha}-1)}{24},\qquad\textrm{ if $\alpha$ is even.}
		\end{array}\right.
	\end{align*}
	Then we have
	\begin{align}
		&NT\left(1,5,5^{\alpha+1}n+5\tilde{	\delta}_\alpha+4\right)\equiv NT\left(4,5,5^{\alpha+1}n+5\tilde{	\delta}_\alpha+4\right)\pmod{5^\alpha}.\label{thcon}
	\end{align}
\end{theorem}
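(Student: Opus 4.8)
The plan is to translate \eqref{thcon} into a divisibility statement for the coefficients of the eta-quotient in \eqref{gencn} and then run a Watson--Hirschhorn--Hunt style descent. Since $5^{\alpha+1}n+5\tilde\delta_\alpha+4=5(5^\alpha n+\tilde\delta_\alpha)+4$ and, by \eqref{gencn}, $NT(4,5,5m+4)-NT(1,5,5m+4)=c(m)$ with $\sum_{m\ge0}c(m)q^m=E_5^4/E_1$, the congruence \eqref{thcon} is equivalent to
\begin{align*}
 c\!\left(5^\alpha n+\tilde\delta_\alpha\right)\equiv0\pmod{5^\alpha}.
\end{align*}
For a residue $r$ let $U_{5,r}$ be the operator $\sum a_m q^m\mapsto\sum a_{5m+r}q^m$, so that $U_{5,r}(f(q^5)g(q))=f(q)\,U_{5,r}(g(q))$. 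Writing $A_\alpha(q):=\sum_{n\ge0}c(5^\alpha n+\tilde\delta_\alpha)q^n$, one has $A_0=E_5^4/E_1$, and a short computation gives $\tilde\delta_{\alpha+1}-\tilde\delta_\alpha=r_\alpha 5^\alpha$ with $r_\alpha=4$ when $\alpha$ is even and $r_\alpha=3$ when $\alpha$ is odd; hence $A_{\alpha+1}=U_{5,r_\alpha}(A_\alpha)$. The goal becomes $5^\alpha\mid A_\alpha$, and the alternation $4,3,4,3,\dots$ of the extracted residues is exactly what produces the two cases in the definition of $\tilde\delta_\alpha$.

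Next I would settle the base of the induction and isolate the mechanism that manufactures the powers of $5$. Using the factorization above together with Ramanujan's identity $\sum_{n\ge0}p(5n+4)q^n=5E_5^5/E_1^6$, I get
\begin{align*}
 A_1=U_{5,4}\!\left(\frac{E_5^4}{E_1}\right)=E_1^4\,U_{5,4}\!\left(\frac{1}{E_1}\right)=5\,\frac{E_5^5}{E_1^2},
\end{align*}
which is the case $\alpha=1$ (here $\tilde\delta_1=4$). For the following step one has $A_2=5\,E_1^5\,U_{5,3}(1/E_1^2)$, and reducing modulo $5$ via $E_1^5\equiv E_5$ and Jacobi's identity $E_1^3=\sum_{n\ge0}(-1)^n(2n+1)q^{n(n+1)/2}$ shows that $U_{5,3}(1/E_1^2)\equiv E_1^{-1}U_{5,3}(E_1^3)\pmod5$; the residue $3$ selects the triangular numbers with $n\equiv2\pmod5$, for which $2n+1\equiv0\pmod5$, so the whole sum is divisible by $5$ and $5^2\mid A_2$. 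This is the $5$-adic version of the classical cancellation in Ramanujan's proof and indicates what must be proved in general.

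The core of the argument is to iterate this phenomenon. I would fix the Hauptmodul $t=qE_5^6/E_1^6$ of $\Gamma_0(5)$ and show that, for $\alpha\ge1$, each $A_\alpha$ lies in a fixed finitely generated module $M=\bigoplus_j \mathbb{Z}[[t]]\cdot (E_5^{a_j}/E_1^{b_j})$ that is stable under both $U_{5,3}$ and $U_{5,4}$. Expanding $U_{5,3}$ and $U_{5,4}$ on the chosen generators via the explicit $5$-dissections of the relevant eta-quotients gives, in the $t$-power basis, matrices whose entries carry prescribed $5$-adic valuations growing with the $t$-degree. Feeding these valuation bounds into the recursion $A_{\alpha+1}=U_{5,r_\alpha}(A_\alpha)$ and inducting on $\alpha$, separately along the even and odd residue phases, then yields that the $5$-adic valuation of the coefficients of $A_\alpha$ is at least $\alpha$, which is precisely $5^\alpha\mid A_\alpha$.

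The step I expect to be the main obstacle is this last one: identifying a module $M$ simultaneously stable under $U_{5,3}$ and $U_{5,4}$, computing the two transition matrices explicitly, and, most delicately, proving the $5$-adic valuation bounds on their entries that are strong enough to gain one full power of $5$ at every increment of $\alpha$. The alternation between the two residues means the valuation need not grow uniformly at each single application of a $U_5$-operator, so the bookkeeping has to be organized over the paired odd/even steps, exactly as in Watson's analysis of $p(5^\alpha n+\delta_\alpha)$.
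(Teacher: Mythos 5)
Your setup is correct and is in fact the paper's own route: the reduction of \eqref{thcon} via \eqref{gencn} to the statement $c(5^\alpha n+\tilde{\delta}_\alpha)\equiv 0\pmod{5^\alpha}$, the recursion $A_{\alpha+1}=U_{5,r_\alpha}(A_\alpha)$ with extracted residues alternating $4,3,4,3,\ldots$, the base case $A_1=5E_5^5/E_1^2$, and the plan of expressing everything in the basis $t^{i-1}$ with $t=qE_5^6/E_1^6$ and tracking $5$-adic valuations of transition-matrix entries are all exactly what the paper does: Theorem \ref{thid} writes $A_\alpha=\frac{E_5^5}{E_1^2}\sum_{i\geq1}\tilde{x}_{\alpha,i}t^{i-1}$ for odd $\alpha$ and $A_\alpha=\frac{E_5^4}{E_1}\sum_{i\geq1}\tilde{x}_{\alpha,i}t^{i-1}$ for even $\alpha$, with the coefficient vectors evolving under two matrices $\tilde{A}$, $\tilde{B}$.

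However, what you have written is a program, not a proof: the two facts that carry all the content are precisely the ones you defer as ``the main obstacle,'' and nothing in your text establishes them. Concretely, you need (i) the dissection identity showing that $U_{5,3}$ and $U_{5,4}$ map the two modules into one another with \emph{explicit} matrices; in the paper this is \eqref{hi}, namely $H\{((z^5-11-z^{-5})/(z-1-z^{-1}))^i\}=\sum_{j\geq1}m_{i,j}u^{i-j}$, a modular-equation computation imported from Hirschhorn and Hunt, from which $\tilde{A}$ and $\tilde{B}$ are read off by taking rows $6i-4$ and $6i-5$ of $M$ as in \eqref{defab}; and (ii) the quantitative valuation bound $\nu(m_{i,j})\geq\left[\frac12(5j-i-1)\right]$ (Hirschhorn--Hunt, Lemma 4.1), which gives $\nu(\tilde{a}_{i,j})\geq\left[\frac12(5j-i-2)\right]$ and $\nu(\tilde{b}_{i,j})\geq\left[\frac12(5j-i-1)\right]$ and feeds the induction of Lemma \ref{lex}, yielding $\nu(\tilde{x}_{\alpha,i})\geq\alpha+\left[\frac12(5i-5)\right]$. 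Without (i) and (ii), or some substitute for them, your argument proves nothing beyond the cases $\alpha=1,2$ that you computed by hand. One further point: your worry that the valuation need not grow at each single application of a $U_5$-operator, forcing paired odd/even bookkeeping, turns out to be unfounded; with the bounds in (ii), each single application of $\tilde{A}$ or $\tilde{B}$ already gains at least one power of $5$, as the step-by-step induction in Lemma \ref{lex} shows.
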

The proof of Theorem \ref{th1} follows from a completely similar argument of Hirschorn and Hunt\cite{HuntHirschhorn1981asimple}.
The key is to establish the following identity for $c(n)$ defined in \eqref{gencn}.
\begin{theorem}\label{thid}
	For $\alpha\geq 1$, we have
	\begin{align}
		\sum_{n=0}^\infty c\left(5^\alpha n+	\tilde{	\delta}_\alpha\right)q^n=		&\sum_{i \geq 1} \tilde{x}_{\alpha, i} q^{i-1} \frac{E_5^{6 i-1}}{E_1^{6 i-4}},\qquad \textrm{ $\alpha$ odd,} \label{thidodd}
		\\	
		\sum_{n=0}^\infty c\left(5^\alpha n+	\tilde{	\delta}_\alpha\right)q^n=		&	\sum_{i \geq 1}\tilde{x}_{\alpha, i} q^{i-1} \frac{E_5^{6 i-2}}{E_1^{6 i-5}}, \qquad\textrm{ $\alpha$ even,}\label{thideven}
	\end{align}
	where
	\begin{align}
		\tilde{\bf {x}}_{1}=\left(\tilde{x}_{1,1}, \tilde{x}_{1,2}, \ldots\right)=(5,0,0,0, \ldots),\no
	\end{align}
	and for $\alpha\geq 1$,
	\begin{equation}\label{def_x}
		\tilde{\bf {x}}_{\alpha+1}=\left\{\begin{array}{lll}
			\tilde{\bf {x}}_{\alpha} \tilde{A}, & \alpha & \text {odd} , 
			\\
			\\
			\tilde{\bf {x}}_{\alpha} \tilde{B}, & \alpha & \text {even} .
		\end{array}\right.
	\end{equation}
	Here $\tilde{A}=\left(\tilde{a}_{i, j}\right)_{i, j \geq 1}$ and $\tilde{B}=\left(\tilde{b}_{i, j}\right)_{i, j \geq 1}$ are defined by
	
	\begin{align}
		\tilde{a}_{i, j}=m_{6 i-4, i-1+j}, \quad \tilde{b}_{i, j}=m_{6 i-5, i-1+j},\label{defab}
	\end{align}
	where $M=\left(m_{i, j}\right)_{i, j \geq 1}$ is defined as follows:
	
	The first five rows of $M$ are
	\begin{align}\label{defm1}
		\left(\begin{array}{ccccccc}
			5 & 0 & 0 & 0 & 0 & 0 & \cdots \\
			2\cdot 5 & 5^{3} & 0 & 0 & 0 & 0 & \cdots \\
			9 & 3 \cdot 5^{3} & 5^{5} & 0 & 0 & 0 & \cdots \\
			4 & 22 \cdot 5^{2} & 4 \cdot 5^{5} & 5^{7} & 0 & 0 & \cdots \\
			1 & 4 \cdot 5^{3} & 8 \cdot 5^{5} & 5^{8} & 5^{9} & 0 & \cdots
		\end{array}\right)
	\end{align}
	and for $i \geq 6, m_{i, 1}=0$, and for $j \geq 2,$
	\begin{align}
		m_{i, j}=25 m_{i-1, j-1}+25 m_{i-2, j-1}+15 m_{i-3, j-1}+5 m_{i-4, j-1}+m_{i-5, j-1}. \label{defm2}
	\end{align}
\end{theorem}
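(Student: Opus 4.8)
The plan is to prove \eqref{thidodd}--\eqref{thideven} by induction on $\alpha$, exactly along the lines of Hirschhorn and Hunt's treatment of \eqref{pnmod5}. The base case $\alpha=1$ asserts $\sum_{n\ge0}c(5n+4)q^n = 5\,E_5^5/E_1^2$, which matches \eqref{thidodd} with $\tilde{\mathbf{x}}_1=(5,0,0,\dots)$ and $\tilde\delta_1=4$. This follows at once from \eqref{gencn}: writing $E_5^4/E_1=E_5^4\cdot(1/E_1)$ with $E_5^4$ a power series in $q^5$, the relabeling $q^5\mapsto q$ in $E_5^4$ contributes a factor $E_1^4$, so that
\[
\sum_{n\ge0}c(5n+4)q^n = E_1^4\cdot\sum_{k\ge0}p(5k+4)q^k = E_1^4\cdot 5\,\frac{E_5^5}{E_1^6} = 5\,\frac{E_5^5}{E_1^2}.
\]
The inductive step is then reduced to understanding how a coefficient-extraction operator acts on the two families of eta-quotients appearing in \eqref{thidodd} and \eqref{thideven}.

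To organize the induction I would introduce the operators $V_s\big(\sum_n a_nq^n\big)=\sum_n a_{5n+s}q^n$ for $0\le s\le4$, together with the two families
\[
F^{\mathrm{odd}}_j := q^{j-1}\frac{E_5^{6j-1}}{E_1^{6j-4}},\qquad F^{\mathrm{even}}_j := q^{j-1}\frac{E_5^{6j-2}}{E_1^{6j-5}}\qquad(j\ge1).
\]
A direct check of the definition of $\tilde\delta_\alpha$ shows $\tilde\delta_{\alpha+1}=\tilde\delta_\alpha+5^\alpha s_\alpha$ with $s_\alpha=3$ for $\alpha$ odd and $s_\alpha=4$ for $\alpha$ even; hence passing from $\sum_n c(5^\alpha n+\tilde\delta_\alpha)q^n$ to $\sum_n c(5^{\alpha+1}n+\tilde\delta_{\alpha+1})q^n$ is precisely one application of $V_3$ (odd $\alpha$) or $V_4$ (even $\alpha$). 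The key lemma I would establish is the action
\[
V_3\big(F^{\mathrm{odd}}_i\big)=\sum_{j\ge1}\tilde a_{i,j}\,F^{\mathrm{even}}_j,\qquad V_4\big(F^{\mathrm{even}}_i\big)=\sum_{j\ge1}\tilde b_{i,j}\,F^{\mathrm{odd}}_j,
\]
with $\tilde a_{i,j},\tilde b_{i,j}$ given by \eqref{defab}. Granting this, \eqref{def_x} yields $\tilde{\mathbf{x}}_{\alpha+1}=\tilde{\mathbf{x}}_\alpha\tilde A$ (resp. $\tilde{\mathbf{x}}_\alpha\tilde B$), and the induction closes.

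To prove the action formulas I would use two classical level-$25$ inputs: the Rogers--Ramanujan continued-fraction dissection $E_1=E_{25}\big(R^{-1}-q-q^2R\big)$ with $R=R(q^5)$, and Ramanujan's quintic identity $R^{-5}-11-R^5=q^{-5}E_5^6/E_{25}^6$. Substituting the first into each $F_i$ expresses it as $E_5^{\bullet}/E_{25}^{\bullet}$ (a series in $q^5$) times a Laurent series in $R$; extracting the class $5n+s$ and then using the quintic identity to turn powers of $R$ back into powers of $E_5^6/E_{25}^6$ re-expresses $V_s(F_i)$ in the target family after the relabeling $q^5\mapsto q$. Carried out for small $i$, this produces the explicit seed rows \eqref{defm1}; for $i\ge6$ the same computation is governed by the degree-$5$ modular relation between the values of the $\Gamma_0(5)$-Hauptmodul $E_1^6/(qE_5^6)$ at $\tau$ and $5\tau$, whose effect on the $U_5$-images is exactly the five-term recurrence \eqref{defm2} with multipliers $(1,5,15,25,25)$. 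The index shifts $\tilde a_{i,j}=m_{6i-4,\,i-1+j}$ and $\tilde b_{i,j}=m_{6i-5,\,i-1+j}$ in \eqref{defab} record how the exponents $6i-1,6i-4$ (resp. $6i-2,6i-5$) track through the dissection.

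The main obstacle is this key lemma, and within it the passage to $i\ge6$: one must show that the single modular relation propagates the explicitly computed seed rows \eqref{defm1} into all of $M$ through \eqref{defm2}, and that the entry selection in \eqref{defab} is consistent with the two interlocking families $F^{\mathrm{odd}},F^{\mathrm{even}}$. The bookkeeping of $q$-powers and of the exponents on $E_1,E_5$ through repeated dissection is delicate, and verifying the five seed rows requires several explicit $5$-dissections. Once the lemma is established, Theorem \ref{thid} follows by induction on $\alpha$ from the base case above; the escalating powers of $5$ along the leading entries of $M$ (the $5^{3},5^{5},5^{7},\dots$ visible in \eqref{defm1} and preserved by \eqref{defm2}) are precisely what will later supply the modulus $5^\alpha$ in Theorem \ref{th1}.
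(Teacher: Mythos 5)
Your proposal is correct and follows essentially the same route as the paper's proof: induction on $\alpha$, the base case $\sum_{n\ge0}c(5n+4)q^n=5E_5^5/E_1^2$ extracted from \eqref{gencn}, and an inductive step applying your operators $V_3$ (for $\alpha$ odd) and $V_4$ (for $\alpha$ even) to the two families $q^{i-1}E_5^{6i-1}/E_1^{6i-4}$ and $q^{i-1}E_5^{6i-2}/E_1^{6i-5}$, with \eqref{def_x} closing the induction. Your ``key lemma'' on the action of $V_3,V_4$ is equivalent to the paper's Lemma asserting \eqref{h6i40} and \eqref{h6i50}, combined with the rewriting \eqref{z3} and the relabeling $q^5\mapsto q$ via \eqref{z2}. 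The one substantive difference is how that lemma gets proved. You propose to establish it from scratch, computing the seed rows \eqref{defm1} by explicit $5$-dissections and deriving the recurrence \eqref{defm2} from a degree-$5$ modular relation, and you flag this as the main obstacle; but this is precisely the content of the identity \eqref{hi}, which is already in Hirschhorn and Hunt \cite{HuntHirschhorn1981asimple} and which the paper cites rather than re-proves. Given \eqref{hi}, the key lemma follows from a very short argument: since $(z^5-11-z^{-5})/(z-1-z^{-1})=z^4+z^3+2z^2+3z+5-3z^{-1}+2z^{-2}-z^{-3}+z^{-4}$ involves only $z$-powers between $-4$ and $4$, its $(6i-4)$-th power involves $z$-powers of absolute value at most $24i-16<5(5i-3)$, so by \eqref{hi} its $H$-image is a polynomial in $u$ of degree less than $5i-3$; comparing with \eqref{hi} (with $i$ replaced by $6i-4$) forces $m_{6i-4,j}=0$ for $j\le i-1$ and yields exactly the index shift $\tilde a_{i,j}=m_{6i-4,i-1+j}$ of \eqref{defab}, and likewise for $\tilde b_{i,j}$. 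So your plan is workable, but the ``delicate bookkeeping'' you anticipate can be replaced by a citation plus this degree count. Two minor points: your quintic identity should be normalized as $(qR(q^5))^{-5}-11-(qR(q^5))^5=E_5^6/(q^5E_{25}^6)$, i.e.\ \eqref{z2} with $z=(qR(q^5))^{-1}$; and your base case invokes Ramanujan's $p(5n+4)$ identity where the paper uses \eqref{hi} with $i=1$ --- both are fine.
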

Using Theorem \ref{thid} and a result due to Lovejoy and Ono \cite{love}, we obtain the following congruences for $c(n)$.
\begin{theorem}\label{th2}
	Let $\ell\geq7$ be prime and define
	the rational numbers $y_{\alpha,\ell}$ by
	\begin{align*}
		y_{\alpha,\ell}:=\left\{\begin{array}{l}
			\frac{23\cdot 5^{\alpha} \cdot \ell^{2}-19}{24}, \quad\textrm{ if $\alpha$ is odd,} \\
			\\
			\frac{19 \cdot 5^{\alpha} \cdot \ell^{2}-19}{24}, \quad\textrm{ if $\alpha$ is even.}
		\end{array}\right.
	\end{align*}
	\begin{enumerate}
		\item 
		If $\alpha\geq1$ is odd, then for every nonnegative integer $n$ we have
		\begin{align}\label{cco}
			c\left(5^\alpha \ell^2n+	y_{\alpha,\ell}\right)&
			\equiv\left\{\left(\frac{15}{\ell}\right)\left(1+\ell-\left(\frac{-24n-23}{\ell}\right)\right)\right\} 	c\left(5^\alpha n+\tilde{	\delta}_\alpha\right)	\nonumber\\&\quad-\ell 	c\left(\frac{5^\alpha n}{\ell^2}+	y_{\alpha,\ell^{-1}}\right)\quad\quad\quad\qquad\quad\qquad\pmod{5^{\alpha+1}}. 
		\end{align}
		\item
		If $\alpha\geq2$ is even, then for every nonnegative integer $n$ we have
		\begin{align}\label{cce}
			c\left(5^\alpha \ell^2n+	y_{\alpha,\ell}\right)& \equiv\left\{\left(\frac{15}{\ell}\right)\left(1+\ell-\ell^{2}\left(\frac{-24n-19}{\ell}\right)\right)\right\} 	c\left(5^\alpha n+\tilde{	\delta}_\alpha\right)
			\nonumber\\ &\quad-\ell 	c\left(\frac{5^\alpha n}{\ell^2}+	y_{\alpha,\ell^{-1}}\right) \quad\quad\quad\quad\quad\quad\quad\quad\pmod{5^{\alpha+1}}.
		\end{align}
		Here above, $(\frac{\cdot}{\ell})$ is the Legendre symbol.
	\end{enumerate}
\end{theorem}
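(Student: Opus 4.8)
The plan is to recognize each generating function from Theorem \ref{thid} as the $q$-expansion of a half-integral weight modular form and then to run the Hecke-operator argument of Lovejoy and Ono. Writing $q=e^{2\pi i\tau}$, I would first transcribe \eqref{thidodd}--\eqref{thideven} into
$$\mathcal F_\alpha(\tau):=\sum_{n\ge0}c\!\left(5^\alpha n+\tilde\delta_\alpha\right)q^{24n+r_\alpha}=\frac{\eta(120\tau)^{s_\alpha}}{\eta(24\tau)^{t_\alpha}}\,P_\alpha(T),\qquad T:=\frac{\eta(120\tau)^6}{\eta(24\tau)^6},$$
where $(r_\alpha,s_\alpha,t_\alpha)=(23,5,2)$ for $\alpha$ odd, $(19,4,1)$ for $\alpha$ even, and $P_\alpha(X)=\sum_{i\ge1}\tilde x_{\alpha,i}X^{i-1}$ is the generating series of the vectors $\tilde{\bf x}_\alpha$ (this uses the factorizations $E_5^{6i-1}/E_1^{6i-4}=(E_5^5/E_1^2)(E_5/E_1)^{6(i-1)}$ and its even analogue, together with $q^{i-1}\cdot(E_5/E_1)^{6(i-1)}=T^{i-1}$). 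The base eta-quotient has weight $3/2$ and $T$ is a weight-zero modular function on $\Gamma_0(120)$, so $\mathcal F_\alpha$ has weight $3/2$ and its $N$-th coefficient $a(N)$, for $N=24n+r_\alpha$, equals $c(5^\alpha n+\tilde\delta_\alpha)$.

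Next I would control the $5$-adic sizes of the entries $\tilde x_{\alpha,i}$. From the seed $\tilde{\bf x}_1=(5,0,\dots)$ and the recursions \eqref{def_x}--\eqref{defm2}, the valuations $v_5(\tilde x_{\alpha,i})$ grow (roughly linearly in $i$), because every entry of $M$ off the first column carries extra factors of $5$ through the coefficients $25,25,15,5,1$. Hence, modulo $5^{\alpha+1}$, the series $P_\alpha(T)$ truncates to a polynomial, and $\mathcal F_\alpha$ is congruent modulo $5^{\alpha+1}$ to a single Hecke eigenform of Eisenstein (Cohen class-number) type. For $\alpha$ odd this is the weight-$3/2$ class-number form twisted by $(\tfrac{15}{\cdot})$, whose eigenvalue under the half-integral weight operator $T_{\ell^2}$ is $\lambda_\ell=(\tfrac{15}{\ell})(1+\ell)$.

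With the eigenform congruence in hand I would invoke the result of Lovejoy and Ono \cite{love}: a weight-$3/2$ form congruent to such an eigenform modulo $5^{\alpha+1}$ inherits the corresponding $T_{\ell^2}$-eigenrelation modulo $5^{\alpha+1}$, namely
$$a(\ell^2N)+\Big(\tfrac{15}{\ell}\Big)\Big(\tfrac{-N}{\ell}\Big)a(N)+\ell\,a(N/\ell^2)\equiv\Big(\tfrac{15}{\ell}\Big)(1+\ell)\,a(N)\pmod{5^{\alpha+1}}.$$
Reading this with $N=24n+r_\alpha$ and unwinding the three indices is then bookkeeping: using $\ell^2\equiv1\pmod{24}$ one checks $a(\ell^2N)=c(5^\alpha\ell^2n+y_{\alpha,\ell})$, $a(N)=c(5^\alpha n+\tilde\delta_\alpha)$, and $a(N/\ell^2)=c(5^\alpha n/\ell^2+y_{\alpha,\ell^{-1}})$, while $(\tfrac{-N}{\ell})=(\tfrac{-24n-r_\alpha}{\ell})$. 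Rearranging recovers exactly \eqref{cco}.

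I expect the genuine obstacle to lie in the even case. The index computation produces the correct $y_{\alpha,\ell}$ there as well, but \eqref{cce} carries an extra factor $\ell^2$ in front of the Legendre symbol, which cannot arise from a single weight-$3/2$ eigenrelation: matching the coefficient $\ell$ of $a(N/\ell^2)$ forces the weight parameter $\lambda=1$, whence the Legendre term should be unweighted. Resolving this is the crux. One must pin down the correct eigenform for even $\alpha$, accounting for the different base quotient $\eta(120\tau)^4/\eta(24\tau)$ and its behaviour under the Atkin--Lehner/Fricke involution at $5$ (which is what injects the extra $\ell$-powers into the eigenrelation), and then extract the parity-dependent form of the Lovejoy--Ono congruence modulo $5^{\alpha+1}$. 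The remaining ingredients --- the valuation estimates justifying the truncation and the verification that the eigenform congruence holds to the full modulus $5^{\alpha+1}$ --- are technical but follow the Hirschhorn--Hunt template once this modular set-up is fixed.
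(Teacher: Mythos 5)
Your strategy is, at bottom, the same as the paper's: use $5$-adic valuation bounds on the $\tilde x_{\alpha,i}$ to reduce the series of Theorem \ref{thid} modulo $5^{\alpha+1}$ to its first term, feed the surviving eta-quotient into the Hecke-type congruences of Lovejoy and Ono, and unwind indices. The genuine gap is the even case, which you yourself label ``the crux'' and then do not resolve; moreover, the mechanism you propose for it (behaviour of $\eta(120\tau)^4/\eta(24\tau)$ under the Atkin--Lehner/Fricke involution at $5$, injecting extra powers of $\ell$) is a misdiagnosis. Nothing needs to be invented here: Lovejoy and Ono prove \emph{two} congruences, not one. In the paper's notation these are \eqref{conb}, for the coefficients $b(n)$ of $q^{23}E_{24}^{23}=\eta(24\tau)^{23}$, and \eqref{cona}, for the coefficients $a(n)$ of $q^{19}E_{24}^{19}=\eta(24\tau)^{19}$; the latter already carries exactly the factor $\ell^{2}$ appearing in \eqref{cce}. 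Since $E_{120}\equiv E_{24}^{5}\pmod 5$, the even-case base quotient satisfies $\eta(120\tau)^{4}/\eta(24\tau)\equiv\eta(24\tau)^{19}\pmod 5$, a form of weight $19/2$, while the odd-case quotient reduces to $\eta(24\tau)^{23}$, of weight $23/2$; the differing shapes of the two congruences (the presence or absence of $\ell^{2}$, equivalently of $\left(\tfrac{5}{\ell}\right)\equiv\ell^{2}\pmod 5$, next to the Legendre symbol) simply reflect these two distinct half-integral weights, reduced modulo $5$. No involution at $5$ enters, and the two parities are treated identically (the paper's Lemma \ref{leab} followed by quoting \eqref{cona}--\eqref{conb}). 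As written, your proposal establishes at most part (1).

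A second point needs repair even in the odd case: you defer ``the verification that the eigenform congruence holds to the full modulus $5^{\alpha+1}$'' as a technicality, but no congruence between the base eta-quotient and an Eisenstein-type form holds beyond modulus $5$, and none is needed. The Lovejoy--Ono relations \eqref{cona}--\eqref{conb} are congruences modulo $5$ only. The modulus is upgraded solely by the scalar: Lemma \ref{lex} gives $5^{\alpha}\mid\tilde x_{\alpha,1}$ and $5^{\alpha+1}\mid\tilde x_{\alpha,i}$ for $i\geq 2$, whence $c(5^{\alpha}n+\tilde\delta_{\alpha})\equiv\tilde x_{\alpha,1}\,b(24n+23)\pmod{5^{\alpha+1}}$ as in \eqref{cal1}, and multiplying the mod-$5$ relation for $b$ through by $\tilde x_{\alpha,1}$ then yields a relation modulo $5^{\alpha+1}$. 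Your outline never states this multiplication step, and without it the assertion that $\mathcal F_\alpha$ is ``congruent modulo $5^{\alpha+1}$ to a single Hecke eigenform'' is unjustified as a starting point rather than a consequence. Once these two points are fixed, your bookkeeping via $\ell^{2}\equiv1\pmod{24}$ producing $y_{\alpha,\ell}$ and $y_{\alpha,\ell^{-1}}$ matches the paper's and is correct.
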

As applications, we have the following refinements of \eqref{thcon}.
\begin{corollary}\label{co1}
	The following hold:
	\begin{enumerate}[(1)]
		\item 
		Let $7\leq\ell\equiv 3\pmod{5}$ be prime. 	If $\alpha\geq1$ is odd (resp. even) and $0\leq r\leq \ell-1$ is an integer satisfying 
		$\left(\frac{-24r-23}{\ell}\right)=-1$ (resp. 	$\left(\frac{-24r-19}{\ell}\right)=1$), then for every nonnegative integer $n$ we have
		\begin{align}
			&NT\left(1,5,5^{\alpha+1}\ell^3n+5^{\alpha+1}\ell^2r+5	y_{\alpha,\ell}+4\right)\nonumber\\&\equiv NT\left(4,5,5^{\alpha+1}\ell^3n+5^{\alpha+1}\ell^2r+5	y_{\alpha,\ell}+4\right)\pmod{5^{\alpha+1}}.\label{coc2}
		\end{align}
		\item 
		Let $\ell\equiv 4\pmod{5}$ be prime and $0\leq r, s\leq \ell-1$ be integers.
		\begin{enumerate}[(i)]
			\item 
			If $\alpha\geq1$ is odd,
			$
			24r+23\equiv0\pmod{\ell}$ and $
			24s\ell+24r+23\not\equiv0\pmod{\ell^2}.
			$
			\item
			If $\alpha\geq2$ is even,
			$
			24r+19\equiv0\pmod{\ell}$ and $
			24s\ell+24r+19\not\equiv0\pmod{\ell^2}.
			$
			
			Then for every nonnegative integer $n$ we have
			\begin{align}
				&NT\left(1,5,5^{\alpha+1}\ell^4n+5^{\alpha+1}\ell^3s+5^{\alpha+1}\ell^2r+5	y_{\alpha,\ell}+4\right)\nonumber\\&\equiv NT\left(4,5,5^{\alpha+1}\ell^4n+5^{\alpha+1}\ell^3s+5^{\alpha+1}\ell^2r+5	y_{\alpha,\ell}+4\right)\pmod{5^{\alpha+1}}.\label{coc1}
			\end{align}	
		\end{enumerate}
	\end{enumerate}
\end{corollary}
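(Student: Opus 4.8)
The plan is to derive the corollary purely by specializing the Hecke-type relations of Theorem \ref{th2} and feeding them into the base congruence of Theorem \ref{th1}. First I would translate every claimed $NT$-congruence into a statement about $c$: by \eqref{gencn} the congruence $NT(1,5,5N+4)\equiv NT(4,5,5N+4)\pmod{5^{\alpha+1}}$ is equivalent to $c(N)\equiv 0\pmod{5^{\alpha+1}}$. Writing the modulus argument as $5N+4$ and dividing by $5$, in case (1) one finds $N=5^{\alpha}\ell^{2}(\ell n+r)+y_{\alpha,\ell}$ and in case (2) $N=5^{\alpha}\ell^{2}(\ell^{2}n+\ell s+r)+y_{\alpha,\ell}$. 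Thus it suffices to prove $c\left(5^{\alpha}\ell^{2}m+y_{\alpha,\ell}\right)\equiv 0\pmod{5^{\alpha+1}}$ with $m=\ell n+r$ in case (1) and $m=\ell^{2}n+\ell s+r$ in case (2).

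Next I would substitute this $m$ into the relation \eqref{cco} (for $\alpha$ odd) or \eqref{cce} (for $\alpha$ even), whose right-hand side has two pieces. For the first piece the key point is that the Legendre-symbol factor is $\equiv 0\pmod 5$ under the stated hypotheses: since $24m+23\equiv 24r+23$ and $24m+19\equiv 24r+19$ modulo $\ell$, the conditions on $r$ force $\left(\frac{-24m-23}{\ell}\right)$ (resp. $\left(\frac{-24m-19}{\ell}\right)$) to equal $-1$, $1$, or $0$ exactly as needed so that $1+\ell-(\cdots)$ or $1+\ell-\ell^{2}(\cdots)$ becomes divisible by $5$; this uses $\ell\equiv 3\pmod 5$ (so $\ell\equiv 3$, $\ell^{2}\equiv -1$) in part (1) and $\ell\equiv 4\pmod 5$ (so $1+\ell\equiv 0$) in part (2). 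Combining this factor of $5$ with $c(5^{\alpha}m+\tilde{\delta}_\alpha)\equiv 0\pmod{5^{\alpha}}$ — which is precisely Theorem \ref{th1} rephrased through \eqref{gencn} — shows the first piece is $\equiv 0\pmod{5^{\alpha+1}}$.

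It then remains to kill the second piece $\ell\,c\left(\frac{5^{\alpha}m}{\ell^{2}}+y_{\alpha,\ell^{-1}}\right)$, and this is where the conditions on $r$ (and $s$) do their second job. That index contributes only when $\frac{5^{\alpha}m}{\ell^{2}}+y_{\alpha,\ell^{-1}}$ is a nonnegative integer; a short computation from the definitions of $\tilde{\delta}_\alpha$ and $y_{\alpha,\ell^{-1}}$ gives $24\left(\frac{5^{\alpha}m}{\ell^{2}}+y_{\alpha,\ell^{-1}}\right)+19=\frac{5^{\alpha}(24m+23)}{\ell^{2}}$ for $\alpha$ odd (and the analogue with $24m+19$ for $\alpha$ even), so the term is present only if $\ell^{2}\mid 24m+23$ (resp. $\ell^{2}\mid 24m+19$). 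In part (1) the symbol is nonzero, hence $\ell\nmid 24m+23$, and the term drops out already after the single substitution $m=\ell n+r$. In part (2) the symbol is zero, i.e. $\ell\mid 24m+23$, so I instead control things modulo $\ell^{2}$: the substitution $m=\ell^{2}n+\ell s+r$ gives $24m+23\equiv 24s\ell+24r+23\pmod{\ell^{2}}$, and the hypothesis $24s\ell+24r+23\not\equiv 0\pmod{\ell^{2}}$ guarantees $\ell^{2}\nmid 24m+23$, so the term vanishes again. Assembling the two pieces yields $c(N)\equiv 0\pmod{5^{\alpha+1}}$, which is \eqref{coc2} and \eqref{coc1}.

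I expect the main obstacle to be the bookkeeping in the last paragraph: pinning down $y_{\alpha,\ell^{-1}}$ and verifying that the ``$n/\ell^{2}$'' coefficient in \eqref{cco}/\eqref{cce} is supported exactly on $\ell^{2}\mid 24m+23$ (resp. $24m+19$), since this is what dictates why part (1) needs only the single parameter $r$ while part (2) requires the extra parameter $s$ and the sharper congruence modulo $\ell^{2}$. Everything else reduces to matching the residue conditions against the factor $1+\ell-(\cdots)\pmod 5$, which is forced cleanly by the two cases $\ell\equiv 3,4\pmod 5$.
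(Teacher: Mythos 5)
Your proposal is correct and takes essentially the same route as the paper's own proof: substitute $n\mapsto \ell n+r$ (resp.\ $n\mapsto \ell^2 n+\ell s+r$) into \eqref{cco}/\eqref{cce}, use the residue hypotheses on $r$ (and $s$) to force the bracketed Legendre factor to be $\equiv 0\pmod 5$, combine with $c\left(5^\alpha m+\tilde{\delta}_\alpha\right)\equiv 0\pmod{5^\alpha}$ (the paper's \eqref{prf_thm1.1_1}), and discard the term $\ell\, c\!\left(5^\alpha m/\ell^2+y_{\alpha,\ell^{-1}}\right)$ because its argument fails to be an integer under the stated conditions. Your bookkeeping identity $24\left(5^{\alpha}m/\ell^{2}+y_{\alpha,\ell^{-1}}\right)+19=5^{\alpha}(24m+23)/\ell^{2}$ (odd case) checks out, so there are no gaps.
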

	
	The paper is organized as follows. We give some fundamental lemmas from \cite{HuntHirschhorn1981asimple} and prove Theorem \ref{thid} in Section \ref{s2}. The proof of Theorem \ref{th1} is provided in Section \ref{s3}. In Section \ref{s4}, we prove Theorem \ref{th2} and Corollary \ref{co1}.
	
\section{Proofs of Theorem \ref{thid}}\label{s2}
	Let $z:=z(q):=\left( q R\left(q^{5}\right)\right)^{-1}$ with
	$$
	R(q):=\prod_{n=1}^{\infty} \frac{\left(1-q^{5 n-4}\right)\left(1-q^{5 n-1}\right)}{\left(1-q^{5 n-3}\right)\left(1-q^{5 n-2}\right)}.
	$$
	
	\begin{lemma}
		We have
	\begin{align}
		z-1-\frac{1}{z}&= \frac{E_{1}}{qE_{25}}, \label{z1}\\
	u:=u(q):=	z^{5}-11-\frac{1}{z^{5}}&= \frac{E_{5}^{6}}{q^{5}E_{25}^{6}} ,\label{z2}\\
		H\left\{\left(\frac{z^{5}-11-z^{-5}}{z-1-z^{-1}}\right)^{i}\right\}&=\sum_{j \geq 1} m_{i, j} u^{i-j},\label{hi}\\
		\frac{	z^{5}-11-\frac{1}{z^{5}}}{	z-1-\frac{1}{z}}&=\frac{E_5^{6}}{q^{5} E_{25}^{6}} \bigg / \frac{E_1}{q E_{25}},\label{z3}
	\end{align}
where $m_{i, j}$ are defined by \eqref{defm1} and \eqref{defm2}. The operator $H$ picks out those terms in which the power is divided by 5.
	\end{lemma}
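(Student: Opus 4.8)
The plan is to dispatch \eqref{z1}, \eqref{z2}, \eqref{z3} as classical evaluations and to put all the real work into \eqref{hi}. Because $z=(qR(q^{5}))^{-1}$ is the reciprocal of the Rogers--Ramanujan continued fraction taken at argument $q^{5}$, identities \eqref{z1} and \eqref{z2} are exactly the classical first-order and fifth-order modular evaluations of that continued fraction, read at $q^{5}$ and rewritten through $\eta(\tau)=q^{1/24}E_{1}$ in the $E_{j}$-notation; these are the very evaluations used in \cite{HuntHirschhorn1981asimple}. Identity \eqref{z3} is then just the quotient of \eqref{z2} by \eqref{z1}, so nothing beyond the previous two is required there.

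For \eqref{hi}, I set $\xi:=z-1-z^{-1}$ and $t:=(z^{5}-11-z^{-5})/(z-1-z^{-1})=u/\xi$. First I would record, by a finite expansion of Laurent polynomials in $z$ (both sides being invariant under $z\mapsto -1/z$), the identity
\[
z^{5}-11-z^{-5}=\xi^{5}+5\xi^{4}+15\xi^{3}+25\xi^{2}+25\xi ,
\]
equivalently $u=\xi t$ and $t=\xi^{4}+5\xi^{3}+15\xi^{2}+25\xi+25$. Eliminating $\xi=u/t$ and clearing denominators yields the key algebraic relation between $t$ and $u$,
\[
t^{5}=25t^{4}+25u\,t^{3}+15u^{2}t^{2}+5u^{3}t+u^{4},
\]
whose coefficients $25,25,15,5,1$ are precisely those of the recurrence \eqref{defm2}. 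Moreover each $t^{i}$ is a Laurent polynomial in $z$ invariant under $z\mapsto -1/z$, so $H\{t^{i}\}$ is such a polynomial with all exponents divisible by $5$; since every Laurent polynomial in $z^{5}$ that is invariant under $z\mapsto -1/z$ is a polynomial in $u=z^{5}-11-z^{-5}$, we already know $H\{t^{i}\}\in\mathbb{C}[u]$, and \eqref{hi} merely records its coefficients.

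It remains to identify those coefficients. The second ingredient is that $H$ is multiplicative against $u$: every term of $u$ has $z$-power divisible by $5$, so $H\{u\,G\}=u\,H\{G\}$ for every $G$. Granting the five base rows, i.e.\ that $H\{t^{i}\}=\sum_{j\ge1}m_{i,j}u^{i-j}$ with the $m_{i,j}$ of \eqref{defm1} for $1\le i\le5$, the general case follows by induction: multiplying the algebraic relation by $t^{i-5}$ gives $t^{i}=25t^{i-1}+25u\,t^{i-2}+15u^{2}t^{i-3}+5u^{3}t^{i-4}+u^{4}t^{i-5}$, and applying $H$ with its multiplicativity against powers of $u$ turns this into
\[
H\{t^{i}\}=25H\{t^{i-1}\}+25u\,H\{t^{i-2}\}+15u^{2}H\{t^{i-3}\}+5u^{3}H\{t^{i-4}\}+u^{4}H\{t^{i-5}\};
\]
substituting $H\{t^{k}\}=\sum_{j}m_{k,j}u^{k-j}$ and comparing coefficients of $u^{i-j}$ reproduces \eqref{defm2} verbatim. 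The boundary value $m_{i,1}=0$ for $i\ge6$ is forced by a degree count: $t^{i}$ has top $z$-degree $4i$, whereas $u^{i-1}$ has top $z$-degree $5(i-1)$, and $5(i-1)>4i$ exactly when $i\ge6$, so the coefficient of $u^{i-1}$ in $H\{t^{i}\}$ must vanish there.

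The hard part will be grounding the induction rather than propagating it. Establishing the five rows \eqref{defm1} requires genuinely computing $H\{t^{i}\}$ for $i\le5$ by extracting the exponents divisible by $5$ from the explicit Laurent polynomials $t^{i}$ and re-expressing each as a polynomial in $u$; this is where the specific integers in \eqref{defm1} get pinned down, and it is the one genuinely computational step. Once these base cases are in hand, the algebraic relation together with the multiplicativity of $H$ makes the passage to all $i\ge6$ purely formal, governed entirely by \eqref{defm2}.
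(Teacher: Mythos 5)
Your proposal is correct, but it is self-contained where the paper is not: the paper's entire ``proof'' of this lemma is citation --- \eqref{z1}--\eqref{z2} are attributed to Watson \cite{watson1938ramanujans}, \eqref{hi} to Hirschhorn and Hunt \cite{HuntHirschhorn1981asimple}, and \eqref{z3} is noted to follow immediately from the first two --- whereas you actually reconstruct the Hirschhorn--Hunt argument for \eqref{hi}. Your reconstruction is sound: the Laurent-polynomial identity $z^{5}-11-z^{-5}=\xi^{5}+5\xi^{4}+15\xi^{3}+25\xi^{2}+25\xi$ with $\xi=z-1-z^{-1}$ is the correct quintic relation; eliminating $\xi$ does give $t^{5}=25t^{4}+25u\,t^{3}+15u^{2}t^{2}+5u^{3}t+u^{4}$; and applying $H$ with its $u$-multiplicativity reproduces \eqref{defm2} exactly (in fact your induction already forces the coefficient of $u^{i-1}$ to vanish, since no $u^{i-1}$ term appears on the right-hand side, so the separate degree count for $m_{i,1}=0$, $i\geq 6$, is a valid but redundant check). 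Two points should be made explicit for this to stand as a complete proof. First, $H$ is defined on $q$-series, so using it as the operator that extracts $z$-powers divisible by $5$ (which you need both for $H\{t^{i}\}\in\mathbb{C}[u]$ and for $H\{uG\}=uH\{G\}$) requires the observation that $z=q^{-1}\times(\text{a power series in }q^{5})$, whence $z$-exponents in distinct residue classes modulo $5$ produce $q$-exponents in distinct residue classes, and no mixing occurs. Second, the five base rows \eqref{defm1} must indeed be computed directly, which you defer; the computation is finite and routine (e.g.\ $H\{t\}=5$ and $H\{t^{2}\}=10u+125$ drop out of the displayed expansion of $t$), but it is genuinely part of the proof. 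As for what each route buys: the paper's citation is shortest and shifts the burden to \cite{HuntHirschhorn1981asimple}, while your argument exposes exactly where the coefficients $25,25,15,5,1$ of \eqref{defm2} come from and makes the lemma verifiable without consulting the reference.
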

	Equations \eqref{z1}-\eqref{z2} were first proved by Watson \cite{watson1938ramanujans}, and \eqref{hi} was due to Hirschhorn and Hunt \cite{HuntHirschhorn1981asimple}. Equation \eqref{z3} follows from \eqref{z1} and \eqref{z2} immediately.

	\begin{lemma}
		Recall that $\tilde{a}_{i, j}, \tilde{b}_{i, j}$ are defined in \eqref{defab}.
	For $i\geq 1$, we have
		\begin{align}
			H\left\{\left(\frac{z^{5}-11-z^{-5}}{z-1-z^{-1}}\right)^{6i-4}\right\}=\sum_{j \geq 1} \tilde{a}_{i, j} u^{5i-3-j}\label{h6i40}
			\intertext{and}
			H\left\{\left(\frac{z^{5}-11-z^{-5}}{z-1-z^{-1}}\right)^{6i-5}\right\}=\sum_{j \geq 1} \tilde{b}_{i, j} u^{5i-4-j}\label{h6i50}.
		\end{align}
	\end{lemma}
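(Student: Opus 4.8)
The plan is to derive both identities directly from the general expansion \eqref{hi}, the only extra ingredient being a vanishing pattern for the entries of $M$. Writing $w := z-1-z^{-1}$ and keeping $u$ as in \eqref{z2}, formula \eqref{hi} with exponent $N$ reads $H\{(u/w)^{N}\}=\sum_{j\geq 1} m_{N,j}\,u^{N-j}$, a finite sum since $M$ is lower triangular (the displayed rows \eqref{defm1} together with the recurrence \eqref{defm2} force $m_{i,j}=0$ for $j>i$). Taking $N=6i-4$ and $N=6i-5$ produces expansions of the two quantities we want, so it remains only to rewrite them in the stated form.

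The key step is the claim that
\[
m_{i,j}=0 \qquad \text{whenever } i>5j.
\]
I would prove this by induction on $i$. For $1\le i\le 5$ the inequality $i>5j$ is impossible (since $j\ge 1$ forces $5j\ge 5\ge i$), so the base case is vacuous. For $i\ge 6$ and $j=1$ the claim is exactly the prescription $m_{i,1}=0$ recorded just before \eqref{defm2}. For $i\ge 6$ and $j\ge 2$ one applies the five-term recurrence \eqref{defm2}: each summand is a multiple of some $m_{i-k,\,j-1}$ with $1\le k\le 5$, and the hypothesis $i>5j$ gives $i-k\ge i-5>5j-5=5(j-1)$, so every such entry vanishes by the induction hypothesis; hence $m_{i,j}=0$.

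With this vanishing in hand, both identities follow by truncation and reindexing. For \eqref{h6i40}, note that $1\le j\le i-1$ implies $5j\le 5i-5<6i-4$, so the claim kills the terms $m_{6i-4,j}$ with $1\le j\le i-1$; the sum in \eqref{hi} therefore effectively runs over $j\ge i$, and the substitution $j=i-1+j'$ turns the coefficient into $m_{6i-4,\,i-1+j'}=\tilde a_{i,j'}$ (by \eqref{defab}) and the exponent into $6i-4-j=5i-3-j'$, giving exactly \eqref{h6i40}. The argument for \eqref{h6i50} is identical: for $1\le j\le i-1$ one has $5j\le 5i-5<6i-5$, so those terms vanish, and the same shift $j=i-1+j'$ yields $\tilde b_{i,j'}=m_{6i-5,\,i-1+j'}$ with exponent $5i-4-j'$.

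I expect the only genuine content to be the vanishing claim $m_{i,j}=0$ for $i>5j$, i.e.\ pinning down the lower boundary of the support of $M$ (column $j$ being supported on rows $j\le i\le 5j$); once one checks that the recurrence \eqref{defm2} pushes this boundary down by exactly one row per column, the extraction of \eqref{h6i40} and \eqref{h6i50} from \eqref{hi} is pure bookkeeping.
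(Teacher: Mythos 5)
Your proof is correct, but it establishes the key vanishing by a different mechanism than the paper. The paper never touches the recurrence \eqref{defm2} directly: it expands the quotient as the Laurent polynomial $z^{4}+z^{3}+2z^{2}+3z+5-3z^{-1}+2z^{-2}-z^{-3}+z^{-4}$, so its $(6i-4)$-th power has $z$-degree at most $4(6i-4)=24i-16<5(5i-3)$; since \eqref{hi} says the $H$-image is a polynomial in $u$, and $u$ has $z$-degree $5$, that polynomial must have degree less than $5i-3$ in $u$. The paper then writes it with the ansatz $\sum_{j\ge 1}a'_{i,j}u^{5i-3-j}$ and compares coefficients against \eqref{hi} (with $i$ replaced by $6i-4$) to get $a'_{i,j}=m_{6i-4,\,i-1+j}=\tilde a_{i,j}$. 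Your route instead proves the support bound $m_{i,j}=0$ for $i>5j$ by induction directly from the defining data \eqref{defm1}--\eqref{defm2}, after which \eqref{h6i40} and \eqref{h6i50} follow from \eqref{hi} by literal truncation and reindexing. The two arguments certify the same underlying fact --- the first $i-1$ coefficients in \eqref{hi} vanish for rows $6i-4$ and $6i-5$ --- but yours is purely combinatorial: it needs neither the explicit nine-term Laurent expansion nor the degree count in $z$, nor the linear independence of powers of $u$ that the paper's coefficient matching implicitly relies on, and it yields the sharper, more general statement that every column $j$ of $M$ is supported on rows $j\le i\le 5j$. One slip in your closing remark: the recurrence pushes the lower boundary of the support down by \emph{five} rows per column, not one; this does not affect the induction itself, which is carried out correctly.
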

	\begin{proof}
		Note that
	$$\frac{z^{5}-11-z^{-5}}{z-1-z^{-1}}=z^{4}+z^{3}+2 z^{2}+3 z+5-\frac{3}{z}+\frac{2}{z^{2}}-\frac{1}{z^{3}}+\frac{1}{z^{4}}.$$	
	This together with \eqref{hi} implies that $H\left\{\left(\frac{z^{5}-11-z^{-5}}{z-1-z^{-1}}\right)^{6i-4}\right\}$ is a polynomial in $u$ with degree less than 
	$5i-3.$ Assume that
		\begin{align}\label{h6i41}
		H\left\{\left(\frac{z^{5}-11-z^{-5}}{z-1-z^{-1}}\right)^{6i-4}\right\}=\sum_{j \geq 1} a^\prime_{i, j} u^{5i-3-j}.
	\end{align}
	Next, we replace $i$ by $6i-4$ in \eqref{hi} to obtain
	\begin{equation}
		H\left\{\left(\frac{z^{5}-11-z^{-5}}{z-1-z^{-1}}\right)^{6i-4}\right\}=\sum_{j \geq 1} m_{6i-4, j} u^{6i-4-j}.\no
	\end{equation}
	This together with \eqref{h6i41} gives
	$$a^\prime_{i, j}=m_{6i-4, i-1+j}=\tilde{a}_{i, j},$$ where the second equality follows from \eqref{defab}. Then \eqref{h6i40} follows.
	
	With a similar argument, one can prove \eqref{h6i50} and we omit the details.
		\end{proof}
	\begin{proof}[Proof of Theorem \ref{thid}]
		We proceed by induction on $\alpha.$
		
		Rewrite \eqref{gencn} as follows:
		\begin{align*}
			\sum_{n=0}^\infty c(n)q^n&=\frac{q^{4} E_{25}^{5}}{E_5^{2}}\left(\frac{E_5^{6}}{q^{5} E_{25}^{6}} \bigg / \frac{E_1}{q E_{25}}\right)
			\\&=\frac{q^{4} E_{25}^{5}}{E_5^{2}}\left(\frac{z^{5}-11-z^{-5}}{z-1-z^{-1}}\right).
		\end{align*}
	Extracting terms with $q^n$ where $n\equiv 4\pmod{5}$ yields
	\begin{align*}
		\sum_{n=0}^\infty c(5n+4)q^{5n+4}&=\frac{q^{4} E_{25}^{5}}{E_5^{2}}\cdot H\left(\frac{z^{5}-11-z^{-5}}{z-1-z^{-1}}\right)
	\\&=\frac{5q^{4} E_{25}^{5}}{E_5^{2}},
	\end{align*} 
	where the second equality follows from \eqref{hi}. Thus
	\begin{align*}
		\sum_{n=0}^\infty c(5n+4)q^{n}=\frac{5 E_{5}^{5}}{E_1^{2}},
	\end{align*} 
	which is the case $\alpha=1$ in \eqref{thidodd}.
	
	For $\alpha$ odd , we suppose that 
	\begin{align}
		\sum_{n=0}^\infty c\left(5^\alpha n+\frac{23\cdot 5^{\alpha}-19}{24}\right)q^n=		&\sum_{i \geq 1} \tilde{x}_{\alpha, i} q^{i-1} \frac{E_5^{6 i-1}}{E_1^{6 i-4}}.\no
	\end{align}
	It follows 
	\begin{align}
		\sum_{n=0}^\infty c\left(5^\alpha n+\frac{23\cdot 5^{\alpha}-19}{24}\right)q^n=		&\sum_{i \geq 1} \tilde{x}_{\alpha, i} \frac{q^{25i-17} E_{25}^{30i-20}}{E_5^{30i-23}}\left(\frac{E_5^{6}}{q^{5} E_{25}^{6}} \bigg / \frac{E_1}{q E_{25}}\right)^{6i-4}.\no
	\end{align}
	Extract terms with $q^n$ where $n\equiv 3\pmod{5}$ to obtain
	\begin{align}
	&	\sum_{n=0}^\infty c\left(5^{\alpha+1} n+\frac{19\cdot \left(5^{\alpha+1}-1\right)}{24}\right)q^{5n+3}\no\\&=		\sum_{i \geq 1} \tilde{x}_{\alpha, i} \frac{q^{25i-17} E_{25}^{30i-20}}{E_5^{30i-23}}H\left\{\left(\frac{E_5^{6}}{q^{5} E_{25}^{6}} \bigg / \frac{E_1}{q E_{25}}\right)^{6i-4}\right\}\no
	\\&=		\sum_{i \geq 1} \tilde{x}_{\alpha, i} \frac{q^{25i-17} E_{25}^{30i-20}}{E_5^{30i-23}}\cdot\left(\sum_{j \geq 1} \tilde{a}_{i, j} u^{5i-3-j}\right),\no
	\end{align}
	where the second equality follows from \eqref{h6i40}. Thus
	\begin{align}
		&	\sum_{n=0}^\infty c\left(5^{\alpha+1} n+\frac{19\cdot \left(5^{\alpha+1}-1\right)}{24}\right)q^{n}\no
		\\&=	\sum_{j \geq 1}\left(	\sum_{i \geq 1}\tilde{x}_{\alpha, i} \tilde{a}_{i, j}\frac{q^{5i-4} E_{5}^{30i-20}}{E_1^{30i-23}}\cdot \left[u\left(q^{\frac{1}{5}} \right)\right]^{5i-3-j}\right)\no
		\\&=	\sum_{j \geq 1} \tilde{x}_{\alpha+1, j} q^{j-1}\frac{ E_{5}^{6j-2}}{E_1^{6j-5}},\label{podd0}
	\end{align}
where the last equality follows from \eqref{def_x} and \eqref{z2}.

	Invoke \eqref{z3} again to rewrite \eqref{podd0} as follows:
	\begin{align}
		&	\sum_{n=0}^\infty c\left(5^{\alpha+1} n+\frac{19\cdot \left(5^{\alpha+1}-1\right)}{24}\right)q^{n}\no
		\\&=	\sum_{i \geq 1} \tilde{x}_{\alpha+1, i} \frac{q^{25i-21} E_{25}^{30i-25}}{E_5^{30i-28}}\left(\frac{E_5^{6}}{q^{5} E_{25}^{6}} \bigg / \frac{E_1}{q E_{25}}\right)^{6i-5}.\no
	\end{align}
	Extract terms with $q^n$ where $n\equiv 4\pmod{5}$,
	\begin{align}
		&	\sum_{n=0}^\infty c\left(5^{\alpha+2} n+\frac{115\cdot 5^{\alpha+1}-19}{24}\right)q^{5n+4}\no \\
		&=	\sum_{i \geq 1} \tilde{x}_{\alpha+1, i} \frac{q^{25i-21} E_{25}^{30i-25}}{E_5^{30i-28}}H\left\{\left(\frac{E_5^{6}}{q^{5} E_{25}^{6}} \bigg / \frac{E_1}{q E_{25}}\right)^{6i-5}\right\}\no \\
		&=	\sum_{i \geq 1} \tilde{x}_{\alpha+1, i} \frac{q^{25i-21} E_{25}^{30i-25}}{E_5^{30i-28}}\cdot\left(\sum_{j \geq 1}\tilde{b}_{i, j} u^{5i-4-j}\right),\no
	\end{align}
	where the second equality follows from \eqref{h6i50}. Then
	\begin{align}
		&	\sum_{n=0}^\infty c\left(5^{\alpha+2} n+\frac{115\cdot 5^{\alpha+1}-19}{24}\right)q^{n}\no
		\\&=	\sum_{j \geq 1}\left( \sum_{i \geq 1} \tilde{x}_{\alpha+1, i} \tilde{b}_{i, j} \frac{q^{5i-5} E_{5}^{30i-25}}{E_1^{30i-28}}\cdot \left[u\left(q^{\frac{1}{5}} \right)\right]^{5i-4-j}\right)\no\\
		&=	\sum_{j \geq 1}\tilde{x}_{\alpha+2, j}q^{j-1}\frac{ E_{5}^{6j-1}}{E_1^{6j-4}}.\no
	\end{align}
	So the proof of \eqref{thidodd} is concluded by induction. Then \eqref{thideven} follows from \eqref{podd0}. This completes the proof of Theorem \ref{thid}.
	\end{proof}
	
\section{Proofs of Theorem \ref{th1}}\label{s3}
	\begin{lemma}
		We have
		\begin{align}
			\nu\left(\tilde{a}_{i, j}\right) \geq\left[\frac{1}{2}(5 j-i-2)\right], \quad \nu\left(\tilde{b}_{i, j}\right) \geq\left[\frac{1}{2}(5 j-i-1)\right],
		\end{align}
		where $\nu(n)$ denotes the exact power of $5$ dividing $n$.
	\end{lemma}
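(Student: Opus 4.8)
The plan is to derive both inequalities from a single valuation bound on the full matrix $M$, and I claim the correct statement is
\[
\nu(m_{r,c})\geq\left[\tfrac{1}{2}(5c-r-1)\right]\qquad\text{for all } r,c\geq 1.
\]
The motivation is that the two substitutions in \eqref{defab} collapse onto exactly this quantity: with $r=6i-4$ and $c=i-1+j$ one computes $5c-r-1=5j-i-2$, while with $r=6i-5$ and $c=i-1+j$ one gets $5c-r-1=5j-i-1$. These are precisely the two exponents in the statement, so once the displayed bound is established the lemma follows by direct specialization, and the whole problem reduces to controlling a single array.

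I would prove the bound by induction on the row index $r$. For the base cases $r=1,\dots,5$ I would read the $5$-adic valuations off the explicit entries in \eqref{defm1} and compare against $\left[\tfrac12(5c-r-1)\right]$; for instance $m_{3,3}=5^{5}$ gives $\nu=5=\left[\tfrac12(15-3-1)\right]$, and the unit entries such as $m_{5,1}=1$ are automatically covered because the right-hand side is then nonpositive. For the inductive step, take $r\geq 6$ and $c\geq 2$ (the column $c=1$ being trivial since $m_{r,1}=0$ forces $\nu=\infty$) and apply \eqref{defm2}. Writing $N=5c-r-1$ and indexing the five summands by $k=1,\dots,5$, so that the $k$-th term is $c_k\,m_{r-k,c-1}$ with $(c_1,\dots,c_5)=(25,25,15,5,1)$, the inductive hypothesis gives $\nu(m_{r-k,c-1})\geq\left[\tfrac12(N-5+k)\right]$, whence the $k$-th term has valuation at least $\nu(c_k)+\left[\tfrac12(N-5+k)\right]$.

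The key arithmetical observation is that the coefficient valuations $(\nu(c_1),\dots,\nu(c_5))=(2,2,1,1,0)$ satisfy $\nu(c_k)\geq\left\lceil\tfrac12(5-k)\right\rceil$, and that $\left\lceil\tfrac12(5-k)\right\rceil$ is exactly the largest amount by which $\left[\tfrac12 N\right]$ can exceed $\left[\tfrac12(N-5+k)\right]$. Consequently each of the five terms has valuation at least $\left[\tfrac12 N\right]=\left[\tfrac12(5c-r-1)\right]$, and since the valuation of a sum is at least the minimum of the valuations of its terms, the same bound holds for $m_{r,c}$, closing the induction. The step I expect to require the most care is precisely this matching between the coefficient valuations $2,2,1,1,0$ and the floor-function drops $\left[\tfrac12 N\right]-\left[\tfrac12(N-5+k)\right]$: this must be checked for both parities of $N$, with the even-$N$ case binding for the terms $k=2$ and $k=4$, where the coefficient valuations $2$ and $1$ are used tightly. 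Everything else is routine bookkeeping, and the final specialization through \eqref{defab} is immediate.
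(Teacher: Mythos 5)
Your proof is correct, and its core is the same as the paper's: reduce both inequalities to the single matrix bound $\nu(m_{r,c})\geq\left[\frac{1}{2}(5c-r-1)\right]$ and then specialize $(r,c)=(6i-4,\,i-1+j)$ for $\tilde{a}_{i,j}$ and $(r,c)=(6i-5,\,i-1+j)$ for $\tilde{b}_{i,j}$; that substitution is precisely the paper's entire proof. The difference lies in what you do with the matrix bound: the paper does not prove it at all but quotes it as Lemma 4.1 of Hirschhorn and Hunt \cite{HuntHirschhorn1981asimple}, whereas you prove it from scratch by induction on the row index, using the explicit rows \eqref{defm1} as base cases and the recurrence \eqref{defm2} for the step. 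Your induction is sound: writing $N=5c-r-1$, the $k$-th term $c_k\,m_{r-k,c-1}$ has valuation at least $\nu(c_k)+\left[\frac{1}{2}(N-5+k)\right]$ by the inductive hypothesis, and since $\bigl(\nu(c_1),\dots,\nu(c_5)\bigr)=(2,2,1,1,0)$ matches the maximal possible drops $\left[\frac{1}{2}N\right]-\left[\frac{1}{2}(N-5+k)\right]\leq\bigl\lceil\frac{1}{2}(5-k)\bigr\rceil$ (with equality in the even-$N$ case), every term has valuation at least $\left[\frac{1}{2}N\right]$, closing the induction; your parity bookkeeping here checks out. In effect you have reconstructed Hirschhorn--Hunt's own argument for their Lemma 4.1: your version buys a self-contained proof, while the paper's citation buys brevity.
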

\begin{proof}	
Recall \cite[Lemma 4.1]{HuntHirschhorn1981asimple} that:
\begin{align}
	\nu\left(m_{i, j}\right) \geq\left[\frac{1}{2}(5 j-i-1)\right].\no
\end{align}
Thus 
$$
\begin{aligned}
	\nu\left(\tilde{a}_{i, j}\right) 
	& =\nu\left(m_{6 i-4, i-1+j}\right) \geq\left[\frac{1}{2}(5(i-1+j)-(6 i-4)-1)\right]=\left[\frac{1}{2}(5 j-i-2)\right], \\
	\nu\left(\tilde{b}_{i, j}\right)
	& =\nu\left(m_{6 i-5, i-1+j}\right) \geq\left[\frac{1}{2}(5(i-1+j)-(6 i-5)-1)\right]=\left[\frac{1}{2}(5 j-i-1)\right].\qedhere
\end{aligned}
$$
\end{proof}	
\begin{lemma}\label{lex}
	For $\alpha, i\geq 1,$ we have 

	\begin{align}
		\nu\left(\tilde{x}_{\alpha, i}\right) &\geq \alpha+\left[\frac{1}{2}(5 i-5)\right].\label{xodd}
	\end{align}
\end{lemma}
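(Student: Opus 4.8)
The plan is to argue by induction on $\alpha$, using the matrix recursion \eqref{def_x} together with the valuation estimates for $\tilde a_{i,j}$ and $\tilde b_{i,j}$ established in the preceding lemma. For the base case $\alpha=1$ one reads off $\tilde x_{1,1}=5$ and $\tilde x_{1,i}=0$ for $i\geq 2$ from the definition of $\tilde{\mathbf{x}}_1$; since $\nu(5)=1=1+\left[\frac12(5\cdot 1-5)\right]$ and $\nu(0)=\infty$, the bound \eqref{xodd} holds.

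For the inductive step, assume \eqref{xodd} for $\alpha$. When $\alpha$ is odd, \eqref{def_x} gives $\tilde x_{\alpha+1,j}=\sum_{i\geq 1}\tilde x_{\alpha,i}\,\tilde a_{i,j}$, so it suffices to bound the valuation of each summand, because the $5$-adic valuation of a sum is at least the minimum of the valuations of its terms. By the inductive hypothesis and the preceding lemma,
\[
\nu\bigl(\tilde x_{\alpha,i}\,\tilde a_{i,j}\bigr)\geq \alpha+\left[\tfrac12(5i-5)\right]+\left[\tfrac12(5j-i-2)\right],
\]
and the claim for $\alpha+1$ reduces to the elementary inequality
\[
\left[\tfrac12(5i-5)\right]+\left[\tfrac12(5j-i-2)\right]\geq 1+\left[\tfrac12(5j-5)\right]\qquad(i,j\geq 1).
\]
The case $\alpha$ even, where $\tilde x_{\alpha+1,j}=\sum_{i\geq 1}\tilde x_{\alpha,i}\,\tilde b_{i,j}$, is handled identically and reduces to the companion inequality with $5j-i-2$ replaced by $5j-i-1$.

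The only genuinely delicate point is verifying these two floor inequalities for all $i,j\geq 1$, and I would dispatch them by a parity analysis. Writing each floor as $\tfrac12\bigl(\text{argument}-(\text{argument}\bmod 2)\bigr)$ and noting that the combined argument $(5i-5)+(5j-i-2)-(5j-5)=4i-2$ (respectively $4i-1$ for the $\tilde b$ inequality) has fixed parity, the difference of the three floors works out, after tracking the residues in each of the four parity classes of $(i,j)$, to one of $2i-2$, $2i-1$, $2i$. In every class this is at least $1$ for $i\geq 1$: the value $2i-2$ arises only when $i$ is even (hence $i\geq 2$), while the value $2i-1$ is already positive. This closes the induction. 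I expect this parity bookkeeping to be the main obstacle, though it is routine; it is worth noting that both inequalities are tight at $i=j=1$, so no slack is wasted, which is precisely what makes \eqref{xodd} the sharp $5$-adic estimate needed to run the Hirschhorn--Hunt descent in the proof of Theorem~\ref{th1}.
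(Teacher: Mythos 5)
Your proof is correct and follows essentially the same route as the paper: induction on $\alpha$ via the recursion \eqref{def_x}, the ultrametric bound $\nu\left(\sum_j \tilde{x}_{\alpha,j}\tilde{a}_{j,i}\right)\geq\min_j\left\{\nu\left(\tilde{x}_{\alpha,j}\right)+\nu\left(\tilde{a}_{j,i}\right)\right\}$, and the valuation estimates for $\tilde{a}_{i,j}$ and $\tilde{b}_{i,j}$ from the preceding lemma. The only difference is that the paper simply asserts the resulting minimum equals $(\alpha+1)+\left[\frac{1}{2}(5i-5)\right]$ (and $(\alpha+1)+\left[\frac{1}{2}(5i-4)\right]$ in the even case) without justification, whereas you verify the underlying floor inequality explicitly by parity analysis, which is exactly the elementary detail the paper leaves to the reader.
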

\begin{proof}
We prove by induction on $\alpha$.

	When $\alpha=1$, \eqref{xodd} is obvious since $\tilde{\bf x}_1=(5,0,0,\cdots)$.
	Suppose $\alpha$ is odd, and
\begin{align}
	\nu\left(\tilde{x}_{\alpha, i}\right) &\geq \alpha+\left[\frac{1}{2}(5 i-5)\right].\no
\end{align}
Then
\begin{align*}
	\nu\left(\tilde{x}_{\alpha+1, i}\right)&=	\nu\left(\sum_{j\geq1}\tilde{x}_{\alpha, j}\tilde{a}_{j,i}\right) \\
	&\geq\min _{j \geq 1}\left\{\nu\left(\tilde{x}_{\alpha, j}\right)+\nu\left(\tilde{a}_{j, i}\right)\right\}
	\\
	&\geq\min_{j\geq1}\left\{\alpha+\left[\frac{1}{2}(5 j-5)\right]+\left[\frac{1}{2}(5 i-j-2)\right]\right\}\\
	&=(\alpha+1)+\left[\frac{1}{2}(5 i-5)\right].
\end{align*}
Next, suppose $\alpha$ is even, and 
\begin{align*}
	\nu\left(\tilde{x}_{\alpha, i}\right)\geq\alpha+\left[\frac{1}{2}(5 i-5)\right].
\end{align*}
Then
\begin{align*}
	\nu\left(\tilde{x}_{\alpha+1, i}\right)&=	\nu\left(\sum_{j\geq 1}\tilde{x}_{\alpha, j}\tilde{b}_{j,i}\right) \\
	& \geq \min _{j \geq 1}\left\{\nu\left(\tilde{x}_{\alpha, j}\right)+\nu\left(\tilde{b}_{j, i}\right)\right\} \\
	&\geq \min _{j \geq 1}\left\{\alpha+\left[\frac{1}{2}(5 j-5)\right]+\left[\frac{1}{2}(5 i-j-1)\right]\right\}\\
	&=(\alpha+1)+\left[\frac{1}{2}(5 i-4)\right],
\end{align*}
which completes the proof of Lemma \ref{lex}.
	\end{proof}
\begin{proof}[Proof of Theorem \ref{th1}]
	Lemma \ref{lex} implies that $\nu\left(\tilde{x}_{\alpha, i}\right)\geq \alpha,$ for all $\alpha, i\geq1$. Thus $\tilde{x}_{\alpha, i}\equiv0\pmod{5^\alpha}$.
Then we deduce from Theorem \ref{thid} that 
\begin{align}
	c\left(5^\alpha n+	\tilde{	\delta}_\alpha\right) \equiv0\pmod{5^\alpha},\label{prf_thm1.1_1}
\end{align}
 which together with \eqref{gencn} gives Theorem \ref{th1}.
	\end{proof}

\section{Proofs of Theorem \ref{th2} and Corollary \ref{co1}}\label{s4}

We need the following lemma.
\begin{lemma}\label{leab}
Define
\begin{align*}
	&F(q):=q^{19}E_{24}^{19}=:\sum_{n=1}^{\infty} a(n) q^{n}, \\
	&G(q):=q^{23}E_{24}^{23}=:\sum_{n=1}^{\infty} b(n) q^{n}.
\end{align*}	
Then we have 
\begin{align}
	c\left(5^\alpha n+\tilde{	\delta}_\alpha\right) &\equiv \tilde{x}_{\alpha, 1}  b(24n+23)  \pmod{5^{\alpha+1}},  & \alpha \textrm{ odd, }\label{cal1} \\
		c\left(5^\alpha n+\tilde{	\delta}_\alpha\right) &\equiv \tilde{x}_{\alpha, 1}  a(24n+19)  \pmod{5^{\alpha+1}}. & \alpha \textrm{ even.} \label{cal2}
\end{align}
\end{lemma}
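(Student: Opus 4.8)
The plan is to reduce the two identities of Theorem~\ref{thid} modulo $5^{\alpha+1}$ so that only the leading ($i=1$) term survives, and then to match that single term against the eta-quotients defining $F$ and $G$ by means of the elementary congruence $E_1^5\equiv E_5\pmod 5$.

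First I would extract from Lemma~\ref{lex} the observation that for every $i\geq 2$ we have $\nu(\tilde{x}_{\alpha,i})\geq\alpha+\left[\frac{1}{2}(5i-5)\right]\geq\alpha+2>\alpha+1$, so that $\tilde{x}_{\alpha,i}\equiv 0\pmod{5^{\alpha+1}}$. Each factor $q^{i-1}E_5^{6i-1}/E_1^{6i-4}$ (resp.\ $q^{i-1}E_5^{6i-2}/E_1^{6i-5}$) is a power series with integer coefficients, since $E_1$ has constant term $1$ and hence $1/E_1=\sum p(n)q^n$ is integral; therefore every term with $i\geq 2$ in \eqref{thidodd} (resp.\ \eqref{thideven}) contributes $0$ modulo $5^{\alpha+1}$ coefficientwise. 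Thus Theorem~\ref{thid} collapses to
\begin{align}
\sum_{n\geq 0}c\left(5^\alpha n+\tilde{\delta}_\alpha\right)q^n\equiv\tilde{x}_{\alpha,1}\,\frac{E_5^5}{E_1^2}\pmod{5^{\alpha+1}}\quad(\alpha\text{ odd}),\no
\end{align}
and to the analogue with $\tilde{x}_{\alpha,1}E_5^4/E_1$ when $\alpha$ is even.

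Next I would compute the generating functions of the quantities $a(24n+19)$ and $b(24n+23)$. Because $F(q)=q^{19}E_{24}^{19}$ only involves exponents $\equiv 19\pmod{24}$, one has $q^{-19}F(q)=E_{24}^{19}$, and replacing $q^{24}$ by $q$ gives $\sum_{n}a(24n+19)q^n=E_1^{19}$; in the same way $\sum_{n}b(24n+23)q^n=E_1^{23}$. It then remains to verify the two mod-$5$ identities of power series $E_5^4/E_1\equiv E_1^{19}$ and $E_5^5/E_1^2\equiv E_1^{23}$. Both are consequences of $E_1^5\equiv E_5\pmod 5$: indeed $E_5^4\equiv E_1^{20}\pmod 5$ forces $E_5^4/E_1\equiv E_1^{19}$, while $E_5^5\equiv E_1^{25}\pmod 5$ forces $E_5^5/E_1^2\equiv E_1^{23}$, the divisions being legitimate because $1/E_1$ is an integral power series.

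Finally, since $\nu(\tilde{x}_{\alpha,1})\geq\alpha$ by Lemma~\ref{lex}, any mod-$5$ congruence between integral power series becomes a mod-$5^{\alpha+1}$ congruence after multiplication by $\tilde{x}_{\alpha,1}$. Comparing coefficients of $q^n$ in the displayed reductions then yields \eqref{cal1} and \eqref{cal2}. The only genuinely delicate point is the passage from the formal identities of Theorem~\ref{thid} to coefficientwise congruences: one must be sure that all the eta-quotients involved are honest integral power series and that the tail $\sum_{i\geq 2}$ is annihilated modulo $5^{\alpha+1}$, which is precisely where the sharp valuation estimate of Lemma~\ref{lex} is used.
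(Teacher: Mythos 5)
Your proposal is correct and follows essentially the same route as the paper: both arguments use Lemma \ref{lex} to kill the $i\geq 2$ terms of Theorem \ref{thid} modulo $5^{\alpha+1}$, reduce to the leading term $\tilde{x}_{\alpha,1}E_5^5/E_1^2$ (resp.\ $\tilde{x}_{\alpha,1}E_5^4/E_1$), and then invoke $E_1^5\equiv E_5\pmod 5$ together with $5^\alpha\mid\tilde{x}_{\alpha,1}$ to match against $G$ (resp.\ $F$). The only cosmetic difference is that the paper performs the dilation $q\mapsto q^{24}$ in the collapsed identity to compare with $q^{23}E_{24}^{23}$ directly, whereas you instead extract $\sum_n b(24n+23)q^n=E_1^{23}$ and $\sum_n a(24n+19)q^n=E_1^{19}$ and work in the undilated variable; these are the same argument.
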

\begin{proof}
	Noting that $5^{\alpha+1}\mid \tilde{x}_{\alpha, i}$ for $i\geq2$, we deduce from \eqref{thidodd} that, if $\alpha$ is odd, then 
	\begin{align*}
		&	\sum_{n=0}^\infty c\left(5^\alpha n+\tilde{	\delta}_\alpha\right)q^{24n+23}\\ 
		& \equiv	\tilde{x}_{\alpha, 1} \frac{q^{23}E_{120}^{5}}{E_{24}^{2}}\\
		& \equiv	\tilde{x}_{\alpha, 1}  G(q)\\
		& \equiv	\tilde{x}_{\alpha, 1}\sum_{n=1}^{\infty} b(n) q^{n} \pmod{5^{\alpha+1}},
	\end{align*}
which gives \eqref{cal1}.

	Similarly, we can apply \eqref{thideven} to obtain \eqref{cal2}.
	\end{proof}
We are now in a position to prove Theorem \ref{th2}.
\begin{proof}[Proof of Theorem \ref{th2}]
From \cite[(12) and (13)]{love} we know that, for primes $\ell\geq5$, 
\begin{align}
	&b\left(\ell^{2} n\right) \equiv\left\{\left(\frac{15}{\ell}\right)\left(1+\ell-\left(\frac{-n}{\ell}\right)\right)\right\} b(n)-\ell b\left(n / \ell^{2}\right) & \pmod{5},\label{conb}\\
		&a\left(\ell^{2} n\right) \equiv\left\{\left(\frac{15}{\ell}\right)\left(1+\ell-\ell^{2}\left(\frac{-n}{\ell}\right)\right)\right\} a(n)-\ell a\left(n / \ell^{2}\right) & \pmod{5}. \label{cona}
\end{align}
Replacing $n$ by $24n+23$ in \eqref{conb} gives
\begin{align*}
	b\left(\ell^2 (24n+23)\right)
	\equiv & \left\{ \left(\frac{15}{\ell}\right)\left(1+\ell-\left(\frac{-24n-23}{\ell}\right)\right)\right\} b(24n+23) \\
	& -\ell b\left( \frac{24n+23}{\ell^2}\right) \pmod{5}.
\end{align*}
Multiplying by $\tilde{x}_{\alpha, 1}$ on both sides of the above congruence and invoking \eqref{cal1} lead to
\begin{align}
	c\left(5^\alpha \frac{l^2(24n+23)-23}{24}+ \tilde{\delta}_\alpha\right) 
	\equiv & \left\{ \left(\frac{15}{\ell}\right)\left(1+\ell-\left(\frac{-24n-23}{\ell}\right)\right)\right\} c(5^\alpha n+ \tilde{\delta}_\alpha) \notag \\
	& - \ell c\left(5^\alpha \frac{\frac{24n+23}{\ell ^2}-23}{24}
	+ \tilde{\delta}_\alpha\right) \pmod{5^{\alpha+1}}, \no
\end{align}
which gives \eqref{cco}. Similarly, replacing $n$ by $24n+19$ in \eqref{cona}, multiplying by $\tilde{x}_{\alpha, 1}$ on both sides of the resulting equation and invoking \eqref{cal2}, we obtain \eqref{cce}. 
\end{proof}

\begin{proof}[Proof of Corollary \ref{co1}]
		(1) Let $7\le \ell \equiv 3\pmod 5$ be primes. For odd $\alpha \ge 1$ and $0\le r\le \ell -1$ satisfying $\left(\frac{-24r-23}{\ell}\right)=-1$,  replacing $n$ by $\ell n+r$ in \eqref{cco} gives
		\begin{align}
			& c\left(5^\alpha \ell^2(\ell n+r)+	y_{\alpha,\ell}\right) \notag \\
			&\equiv  \left\{\left(\frac{15}{\ell}\right)\left(1+\ell-\left(\frac{-24(\ell n+r)-23}{\ell}\right)\right)\right\} 	c\left(5^\alpha (\ell n+r)+\tilde{	\delta}_\alpha\right)	\notag \\
			& \quad-\ell 	c\left(\frac{5^\alpha (\ell n+r)}{\ell^2}+	y_{\alpha,\ell^{-1}}\right)  \hspace{4cm}\pmod{5^{\alpha+1}} .
		\end{align}
Since $\frac{5^\alpha (\ell n+r)}{\ell^2}+	y_{\alpha,\ell^{-1}}$ is not a integer when $\left(\frac{-24r-23}{\ell}\right)=-1$, we have $$	c\left(\frac{5^\alpha (\ell n+r)}{\ell^2}+	y_{\alpha,\ell^{-1}}\right) =0.$$Thus
	$
		 c\left(5^\alpha \ell^2(\ell n+r)+	y_{\alpha,\ell}\right) 
		\equiv  \left(\frac{15}{\ell}\right)(\ell+2)
		c \left(5^\alpha (\ell n+r)+\tilde{	\delta}_\alpha\right)
		\equiv  ~0 \pmod{5^{\alpha+1}},
$ where the last congruence follows from \eqref{prf_thm1.1_1}.
This together with \eqref{gencn} gives
		 \eqref{coc2} with $\alpha$ odd. Similarly, we can apply \eqref{cce} to prove \eqref{coc2} with $\alpha$ even.
		
		(2) Let $\ell\equiv 4\pmod{5}$ be primes and $0\leq r, s\leq \ell-1$ be integers satisfying $
		24r+23\equiv0\pmod{\ell}$ and $24s\ell+24r+23\not\equiv 0 \pmod{\ell^2}$. When $\alpha$ is odd, replacing $n$ by $\ell^2n+\ell s+r$  in \eqref{cco} and noting that $\ell\equiv 4\pmod{5},
		24r+23\equiv0\pmod{\ell}$,
		we find that
		\begin{align}
			& c\left(5^\alpha \ell^2(\ell^2n+\ell s+r)+y_{\alpha,\ell}\right) \notag \\
			&\equiv  -\ell 	c\left(\frac{5^\alpha (\ell^2 n+\ell s+r)}{\ell^2}+	y_{\alpha,\ell^{-1}}\right)
		\pmod{5^{\alpha+1}} .\no
		\end{align}
	We have $c\left(\frac{5^\alpha (\ell^2 n+\ell s+r)}{\ell^2}+	y_{\alpha,\ell^{-1}}\right)
=0$ since $\frac{5^\alpha (\ell^2 n+\ell s+r)}{\ell^2}+	y_{\alpha,\ell^{-1}}$ is not a integer when
	$24s\ell+24r+23\not\equiv 0 \pmod{\ell^2}.$ Thus
		\begin{align}
		& c\left(5^\alpha \ell^2(\ell^2n+\ell s+r)+y_{\alpha,\ell}\right) 
		\equiv 0 \pmod{5^{\alpha+1}} ,\no
	\end{align}
	which together with \eqref{gencn}  gives \eqref{coc1} with $\alpha$ odd. With a similar argument, we can prove
	\eqref{coc1} with $\alpha$ even after replacing $n$ by $\ell^2n+\ell s+r$  in \eqref{cce}.
\end{proof}

\section*{Acknowledgements}
	This work is supported by NSFC (National Natural Science Foundation of China) through Grants NSFC 12071331 and NSFC 11971341.


\begin{thebibliography}{999}
	\bibitem{andrews}
	G.~E. Andrews.
	\newblock The ramanujan--dyson identities and george beck’s congruence
	conjectures.
	\newblock {\em International Journal of Number Theory}, 17(02):239--249, 2021.
	
	\bibitem{Chan}
	S.~H. Chan, R.~Mao, and R.~Osburn.
	\newblock Variations of andrews-beck type congruences.
	\newblock {\em Journal of Mathematical Analysis and Applications},
	495(2):124771, 2021.
	
	\bibitem{Chern}
	S.~Chern.
	\newblock Weighted partition rank and crank moments. i. andrews--beck type
	congruences.
	\newblock In {\em Proceedings of the Conference in Honor of Bruce Berndt
		(accepted)}, 2021.
	
	\bibitem{chern2}
	S.~Chern.
	\newblock Weighted partition rank and crank moments. iii. a list of
	andrews--beck type congruences modulo 5, 7, 1 1 and 1 3.
	\newblock {\em International Journal of Number Theory}, 18(01):141--163, 2022.
	
	\bibitem{HuntHirschhorn1981asimple}
	M.~D. Hirschhorn and D.~C. Hunt.
	\newblock A simple proof of the ramanujan conjecture for powers of 5.
	\newblock {\em Journal f{\"u}r die reine und angewandte Mathematik (Crelles
		Journal)}, 1981(326):1--17, 1981.
	
	\bibitem{kim}
	E.~Kim.
	\newblock Andrews-beck type congruences for overpartitions.
	\newblock {\em The Electronic Journal of Combinatorics}, pages P1--37, 2022.
	
	\bibitem{lin}
	B.~L. Lin, L.~Peng, and P.~C. Toh.
	\newblock Weighted generalized crank moments for k-colored partitions and
	andrews-beck type congruences.
	\newblock {\em Discrete Mathematics}, 344(8):112450, 2021.
	
	\bibitem{love}
	J.~Lovejoy and O.~Ken.
	\newblock Extension of ramanujan's congruences for the partition function
	modulo powers of five.
	\newblock {\em Journal f{\"u}r die reine und angewandte Mathematik},
	542:123--132, 2002.
	
	\bibitem{maocon}
	R.~Mao.
	\newblock Congruences for andrews-beck partition statistics.
	\newblock {\em preprint}.
	
	\bibitem{mao2022total}
	R.~Mao.
	\newblock On the total number of parts functions associated with ranks of
	partitions modulo 5 and 7.
	\newblock {\em The Ramanujan Journal}, 58(4):1201--1243, 2022.
	
	\bibitem{ramanujan1919some}
	S.~Ramanujan.
	\newblock Some properties of p(n), the number of partitions of n.
	\newblock 19:207--210, 1919.
	
	\bibitem{watson1938ramanujans}
	G.~Watson.
	\newblock Ramanujans vermutung {\"u}ber zerf{\"a}llungszahlen.
	\newblock {\em Journal f{\"u}r die reine und angewandte Mathematik (Crelles
		Journal)}, 1938(179):97--128, 1938.
	
	\bibitem{yao}
	O.~X. Yao.
	\newblock Proof of a lin-peng-toh's conjecture on an andrews-beck type
	congruence.
	\newblock {\em Discrete Mathematics}, 345(1):112672, 2022.
\end{thebibliography}
\end{document}